\documentclass[11pt,reqno]{amsart}

\usepackage[T1]{fontenc}
\usepackage{lmodern}
\usepackage{microtype}
\usepackage{amsmath,amssymb,amsthm}
\usepackage{xcolor}
\usepackage[a4paper,margin=1.18in]{geometry}
\usepackage[
  backend=biber,
  style=numeric,
  sorting=none,
  giveninits=true,
  maxbibnames=99,
  doi=true,
  url=false,
  isbn=false
]{biblatex}
\usepackage[
  colorlinks=true,
  linkcolor=blue!55!black,
  citecolor=blue!55!black,
  urlcolor=blue!55!black
]{hyperref}
\hypersetup{
  pdftitle={An upper bound for an exceptional automorphism group},
  pdfauthor={Xu Zhuang},
}

\newtheorem{theorem}{Theorem}[section]
\newtheorem{proposition}[theorem]{Proposition}
\newtheorem{lemma}[theorem]{Lemma}
\theoremstyle{remark}
\newtheorem{remark}[theorem]{Remark}

\newcommand{\Aut}{\operatorname{Aut}}

\newcommand{\Fix}{\operatorname{Fix}}
\newcommand{\PGL}{\operatorname{PGL}}
\newcommand{\F}{\mathbf{F}}
\newcommand{\PP}{\mathbf{P}}

\title
{An upper bound for an exceptional automorphism group}

\author{Xu Zhuang}
\address{Department of Mathematics, University of California, Irvine,
  Irvine, CA 92697, USA}
\email{xzhuang8@uci.edu}

\date{\today}

\begin{document}

\begin{abstract}
Let $q=p^h>7$ be odd, put $m=(q+1)/2$, and suppose that $i=(m-2)/2$ satisfies $\gcd(i,m)=\gcd(i+2,m)=1$. For the $\mathbf{F}_{q^2}$-maximal function field $\mathcal{F}_i=\mathbf{F}_{q^2}(x,y)$ defined by $y^m=x^i(x^2+1)$, Peter Beelen, Maria Montanucci, Jonathan Niemann, and Luciane Quoos showed that the geometric automorphism group contains a subgroup of order $4(q+1)$ and conjectured that its order is exactly $4(q+1)$. We prove this equality by establishing the reverse inequality.
\end{abstract}

\maketitle

\section{Introduction}

Maximal function fields over finite fields provide a rich source of curves with many rational points and often with substantial symmetry. Their automorphism groups are useful both as geometric invariants and in the study of isomorphism classes. For a curve given by an explicit equation, one can frequently construct a large subgroup of automorphisms directly. Showing that the constructed subgroup is the full automorphism group is usually a separate problem and may require a geometric description that is intrinsic to the curve.

Let $q=p^h$ be a power of an odd prime and put $m=(q+1)/2$. Beelen, Montanucci, Niemann, and Quoos study the family of $\F_{q^2}$-maximal function fields $\mathcal{F}_i=\F_{q^2}(x,y)$ defined by $y^m=x^i(x^2+1)$,
under the assumptions $\gcd(i,m)=\gcd(i+2,m)=1$. Let $k=\overline{\F}_{q^2}=\overline{\F}_q$ and write $K_i=k\mathcal{F}_i=k(x,y)$ for the extension of constants to $k$. The corresponding smooth projective curve has genus $m-1$; see \cite[Section~3]{BeelenMontanucciNiemannQuoos}. They give a precise description of its geometric automorphism group $\Aut_k(K_i)$ except in the case $q>7$ and $i=(m-2)/2$; see \cite[Theorem~4.9]{BeelenMontanucciNiemannQuoos}. This parameter is fixed by the symmetry $i\mapsto m-2-i$ coming from the reciprocal change of variables in \cite[Lemma~3.2]{BeelenMontanucciNiemannQuoos}, and the exceptional curve consequently has an additional visible symmetry.

In this remaining case, they show that $\Aut_k(K_i)$ contains a subgroup of order $4(q+1)$, which we denote by $H$; see \cite[Theorem~4.6]{BeelenMontanucciNiemannQuoos}. They conjecture that $|\Aut_k(K_i)|=4(q+1)$ \cite[Conjecture~4.11]{BeelenMontanucciNiemannQuoos}. Since $H$ already has this order, the conjecture is equivalent to the assertion that $H$ is the full geometric automorphism group.
\begin{theorem}\label{thm:main}
Let $q=p^h>7$ be odd, let $m=(q+1)/2$, and suppose that $i=(m-2)/2$ satisfies $\gcd(i,m)=\gcd(i+2,m)=1$. If $H$ is the subgroup constructed in \cite[Theorem~4.6]{BeelenMontanucciNiemannQuoos}, then $\Aut_k(K_i)=H$. In particular, $|\Aut_k(K_i)|=4(q+1)$.
\end{theorem}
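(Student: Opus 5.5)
The strategy is to go through the Kummer subgroup. Let $C\le H$ be the cyclic group of order $m$ acting by $y\mapsto \zeta y$ with $\zeta^m=1$, so that $K_i^C=k(x)$. Because $\gcd(i,m)=\gcd(i+2,m)=1$, the extension $K_i/k(x)$ is totally ramified exactly over the four places $x=0$, $x=\pm\sqrt{-1}$, $x=\infty$. Hence $C$ has exactly four fixed points $P_0,P_{+},P_{-},P_\infty$ on the curve and acts freely elsewhere. I would first check that this matches the genus $g=m-1$ by Riemann--Hurwitz, and that $|H|=8m=8(g+1)$.

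\emph{Reduction to normality.} Suppose $C\trianglelefteq G=\Aut_k(K_i)$. Then $G/C$ embeds in $\PGL_2(k)$ acting on $k(x)$, and it must permute the four branch points $\{0,\infty,\sqrt{-1},-\sqrt{-1}\}$. These four points form a harmonic quadruple: the cross-ratio is $-1$, and $p$ is odd, so this does not collapse. The stabilizer in $\PGL_2(k)$ of a harmonic quadruple is dihedral of order $8$. Therefore $|G|\le 8m=|H|$, and so $G=H$. All of the work is in proving that $C$ is normal, or at least that any $\sigma\in G$ normalizes $C$.

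\emph{Proving normality.} I would argue through the fixed points of $C$ and stabilizers.

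First, for each $P\in\{P_0,P_\pm,P_\infty\}$ the stabilizer $G_P$ contains $C$. Since $p\nmid m$, the structure $G_P=S_p\rtimes \mathrm{cyclic}$ shows that $C$ lies in the cyclic complement.

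Second, since $K_i$ is $\F_{q^2}$-maximal it has $p$-rank $0$. So every nontrivial $p$-subgroup of $G$ fixes exactly one point and acts freely elsewhere. I would use this together with Riemann--Hurwitz for $K_i/K_i^{G}$ to enumerate the possible short-orbit signatures. In doing so I would use that $H$ already forces $K_i^G$ to be rational and that $|G|\ge 8(g+1)$.

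I would then split into cases.
\begin{itemize}
\item \emph{Tame case ($p\nmid|G|$).} The Hurwitz bound gives $|G|\le 84(g-1)$, and the standard list of signatures with $|G|\ge 8(g+1)$ is short. In each signature I would show that the points with stabilizer of order divisible by $m$ form a single orbit of size at most $4$. From this, $C$ is the kernel of the action on that orbit restricted to the pointwise stabilizer, and hence $C$ is characteristic in the normal subgroup fixing the orbit pointwise.
\item \emph{Wild case ($p\mid|G|$).} The $p$-rank $0$ results of Giulietti--Korchmáros say that either a Sylow $p$-subgroup $S_p$ fixes a point and $G=G_P$, or $G$ acts on a unique short orbit as one of $\PGL_2,\mathrm{PSL}_2,\mathrm{PSU}_3,\mathrm{Sz},\mathrm{Ree}$. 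The large groups are excluded because their orders are far too large compared with $g=(q-1)/2$, via known bounds such as $|G|<8g^3$ outside the Hermitian-type examples, and via the requirement of containing the dihedral-by-cyclic $H$. If instead $G=G_P$ fixes a point, then $G$ fixes a point of $C$, and the cyclic complement has order at most $4g+2$. Combined with Nakajima-type bounds $|S_p|\le \frac{4p}{(p-1)^2}g^2$, this should force $S_p$ to be trivial or to contradict the freeness of $C$ off four points.
\end{itemize}

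The main obstacle is exactly this normality step, especially the wild case. The group order $8(g+1)$ is just below the thresholds where clean classification theorems apply, so some ad hoc signature analysis is unavoidable. If the case analysis becomes unwieldy, my fallback is to work directly with the Weierstrass semigroups at $P_0,P_\pm,P_\infty$. I would show these four points are distinguished among all points by their gap sequences, computable from the Kummer structure. Then every automorphism permutes them, which also gives normality of $C$ as the pointwise stabilizer subgroup acting with quotient genus $0$.
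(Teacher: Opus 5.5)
\textbf{There is a genuine gap: the normality of $C$ is never proved, and the wild case as sketched fails.}

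Your reduction step is sound. If $C\trianglelefteq G$, then $G/C$ acts faithfully on $k(x)$ and preserves the harmonic quadruple $\{0,\infty,\pm\sqrt{-1}\}$. Its stabilizer has order $8$, so $|G|\le 8m=|H|$. Oddness of $p$ alone does not justify the order $8$: in characteristic $3$ this quadruple is also equianharmonic and its stabilizer is $S_4$. This is harmless here, because $q\equiv 7\pmod 8$ forces $p\equiv 7\pmod 8$. The problem is that the normality of $C$, which you rightly say carries all the work, is never established.

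\emph{Tame case.} This case does close, and more simply than you suggest. Since $|G|=8m[G:H]$ and $P_0$ has a cyclic stabilizer of order divisible by $m\ge 12$, Riemann--Hurwitz over the rational quotient leaves only the signature $(2,4,2m)$, with $[G:H]=1$.

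\emph{Wild case, point-fixing subcase.} If $G$ fixes a point, the conclusion is immediate. A $p'$-subgroup of $S_p\rtimes(\text{cyclic})$ is cyclic, while $G$ contains the Klein four-group $\langle\tau,\rho\rangle$. So the Nakajima-bound discussion is unnecessary, and as written it does not conclude anyway: for $q=p$ it permits $|S_p|=p$.

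\emph{Wild case, non-solvable subcase.} This is the real obstacle: $G$ is $2$-transitive on a short orbit with socle such as $\mathrm{PSL}(2,p^r)$. Your claim that such groups are excluded because their orders are far too large is false, since $|\mathrm{PSL}(2,q)|=q(q^2-1)/2<(q-1)^3=8g^3$. In fact no argument using only the genus, the $p$-rank and $|H|$ can exclude this case. The hyperelliptic curve $y^2=x^q-x$ has the same genus $(q-1)/2$ and also has $p$-rank zero. It admits a group of automorphisms of order $2q(q^2-1)$, a multiple of $4(q+1)$, acting $2$-transitively through $\PGL_2(\F_q)$ on the $q+1$ fixed points of its hyperelliptic involution. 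Curve-specific input is needed exactly here.

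\emph{Fallback.} Your Weierstrass-semigroup fallback would require determining the gap sequence at every point of the curve, and this is not attempted.

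\textbf{How the paper avoids this.} It normalizes a different subgroup, of small index rather than small quotient. With $u=y^2/x$ of degree $4$, the extension $K/k(u)$ is Galois with group $V=\langle\tau,\rho\rangle$. Castelnuovo's inequality together with $g=2n-1\ge 11>9$ then shows that $k(u)$ is the unique rational subfield of index $4$ (Proposition~\ref{prop:unique}). Hence every automorphism, tame or wild, normalizes $V$. Fixed-point counts force every automorphism to centralize $\tau$. Counting M\"obius maps preserving $\{0,\infty\}$ and $\{a:a^{2n}=4\}$ then gives $|\Aut_k(K_i)|\le 16n$, with no tame/wild case split.

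The same trick cannot show that $k(x)$ is unique. For two rational subfields of index $m=2n$, Castelnuovo only gives $g\le(2n-1)^2$, which is no contradiction. That is why attacking the normality of $C$ directly pulls in the classification of automorphism groups in positive characteristic.
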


The lower bound in Theorem~\ref{thm:main} is exactly the result of \cite[Theorem~4.6]{BeelenMontanucciNiemannQuoos}. The content of this paper is the upper bound $|\Aut_k(K_i)|\leq4(q+1)$.

The main idea is to use a quotient different from the original Kummer map $x$. Write $m=2n$, so that the exceptional parameter is $i=n-1$, and consider $u=y^2/x$. The sign involution $\tau:(x,y)\mapsto(x,-y)$ and the reciprocal involution $\rho:(x,y)\mapsto(x^{-1},y/x)$ both fix $u$. We show that they generate the Galois group $V$ of the degree-four extension $K_i/k(u)$. Moreover, $k(u)$ is the unique rational subfield of index four and is preserved by every automorphism of the curve.

It remains to bound the induced action on the $u$-line. The group $V$ has three nonidentity involutions. The sign involution $\tau$ fixes four points of the curve, whereas $\rho$ and $\tau\rho$ each fix $2n$ points. Every automorphism of the curve preserves $k(u)$ and hence conjugates $V$ to itself. Since conjugation preserves the number of fixed points, it must send $\tau$ to itself. The four fixed points of $\tau$ have $u$-values $0$ and $\infty$, so the induced M\"{o}bius transformation preserves the pair $\{0,\infty\}$. It also preserves the remaining branch points, which form the set $\{a\in k:a^{2n}=4\}$. A direct count gives at most $4n$ such transformations and therefore the upper bound $16n=4(q+1)$ for the full automorphism group.

\section{The biquadratic cover and its intermediate quotients}

Since $i=(m-2)/2$ is an integer, $m$ is even. Write $m=2n$, so that $i=n-1$ and $q+1=4n$. The coprimality assumptions imply that $n$ is even: if $n$ were odd, both $n-1$ and $n+1$ would have a common factor $2$ with $2n$. Moreover, $n\geq6$. Indeed, the smaller even values $n=2$ and $n=4$ give respectively $q=7$ and $q=15$, and neither is allowed by the hypotheses.

\begin{remark}
Conversely, when $n$ is even, the equalities $\gcd(n-1,2n)=\gcd(n+1,2n)=1$ hold automatically. Since $q+1=4n$, the exceptional parameters satisfying the coprimality assumptions are therefore exactly the odd prime powers $q>7$ with $q\equiv7\pmod{8}$. The first values are $q=23,31,47,71,79,\ldots$.
\end{remark}

Let $X$ be the smooth projective curve with function field $K=k(X)=K_i=k(x,y)$, where $y^{2n}=x^{n-1}(x^2+1)$.
Choose $\alpha\in k$ with $\alpha^2=-1$. The function $x^{n-1}(x^2+1)$ has zeros of orders $n-1,1,1$ at $x=0,\alpha,-\alpha$, respectively, and a pole of order $n+1$ at $x=\infty$. The coprimality assumptions are $\gcd(n-1,2n)=\gcd(n+1,2n)=1$, so all four orders are coprime to $2n$. Since $p\nmid2n$, the Kummer ramification criterion shows that $x:X\to\PP^1$ is totally ramified exactly above $0,\infty,\alpha,-\alpha$. Let $P_0,P_\infty,P_\alpha,P_{-\alpha}$ denote the unique points of $X$ above them. We use the divisor calculation $(x)=2n(P_0-P_\infty)$ and $(y)=(n-1)P_0+P_\alpha+P_{-\alpha}-(n+1)P_\infty$ from \cite[Equation~(3)]{BeelenMontanucciNiemannQuoos}.
They also show that $g(X)=2n-1$.
Set $u=y^2/x$. The preceding divisor formulas give $(u)=2P_\alpha+2P_{-\alpha}-2P_0-2P_\infty$.
Thus the pole divisor of $u$ has degree four, and hence $[K:k(u)]=4$; see \cite[Theorem~1.4.11]{Stichtenoth}. We will also use the identity $u^n=x+x^{-1}$.

Consider the two automorphisms $\tau(x,y)=(x,-y)$ and $\rho(x,y)=(x^{-1},y/x)$.
The second is the reciprocal isomorphism of \cite[Lemma~3.2]{BeelenMontanucciNiemannQuoos}, specialized to the fixed parameter $i=n-1$. The maps $\tau$ and $\rho$ are commuting involutions, and both fix $u$.

\begin{proposition}\label{prop:cover}
The extension $K/k(u)$ is Galois with group $V=\langle\tau,\rho\rangle\simeq C_2\times C_2$.
\end{proposition}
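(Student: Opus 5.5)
The plan is to show that $V$ is a group of order four of automorphisms of $K$ fixing $k(u)$. Since $[K:k(u)]=4$ was established above, Galois theory then forces $K^V=k(u)$ and $\operatorname{Gal}(K/k(u))=V$.

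\textbf{Step 1: $\tau$ and $\rho$ are automorphisms.} For $\tau$ this is immediate, since $(-y)^{2n}=y^{2n}$. For $\rho$ we must check that $(x^{-1},y/x)$ satisfies the defining equation:
\[
(y/x)^{2n}=y^{2n}x^{-2n}=x^{n-1}(x^2+1)x^{-2n}=x^{-(n-1)}\bigl(x^{-2}+1\bigr).
\]
The last equality holds because $x^{-n-1}(x^2+1)=x^{-n+1}(1+x^{-2})$. Hence $\rho$ defines a $k$-endomorphism of $K$. It is an involution, because $\rho^2$ sends $x$ to $x$ and $y$ to $\frac{y/x}{x^{-1}}=y$; in particular it is an automorphism.

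\textbf{Step 2: commutation and fixing $u$.} The maps commute, since both $\tau\rho$ and $\rho\tau$ send $(x,y)$ to $(x^{-1},-y/x)$. Both fix $u$: $\tau$ clearly does, and $\rho(u)=(y^2/x^2)/x^{-1}=y^2/x=u$.

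\textbf{Step 3: $V$ has order four.} The elements $\tau$, $\rho$ and $\tau\rho$ are all nontrivial. Indeed, $\tau(y)=-y\neq y$ because $p$ is odd, and $\rho(x)=x^{-1}\neq x$, since $x$ is transcendental over $k$. Thus $V=\{1,\tau,\rho,\tau\rho\}\simeq C_2\times C_2$ has order $4$.

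\textbf{Step 4: conclusion.} Since $V$ fixes $u$, we have $k(u)\subseteq K^V$. By Artin's theorem $[K:K^V]=|V|=4$. Combined with $[K:k(u)]=4$ from the pole divisor computation, this gives $K^V=k(u)$. Therefore $K/k(u)$ is Galois with group exactly $V$.

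No step here is a real obstacle. The only point needing care is Step 1, namely verifying that $\rho$ respects the equation with the specific exponent $i=n-1$. This is exactly where the exceptional parameter $i=(m-2)/2$ is used, since for general $i$ the reciprocal map relates $\mathcal F_i$ to $\mathcal F_{m-2-i}$.
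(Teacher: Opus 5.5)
Your proof is correct and follows essentially the same route as the paper. Both arguments compare $|V|=4$ with $[K:k(u)]=4$: the paper uses $|V|\le|\Aut_{k(u)}(K)|\le[K:k(u)]$, and you use Artin's theorem $[K:K^V]=|V|$, which is the same count. Your explicit check in Step 1 that $\rho$ respects the equation when $i=n-1$ is a detail the paper does not verify, instead citing the reciprocal isomorphism of Beelen--Montanucci--Niemann--Quoos.
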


\begin{proof}
The distinct commuting involutions $\tau$ and $\rho$ generate a subgroup
$V$ of order four. Since both fix $u$, we have
$V\leq\Aut_{k(u)}(K)$. Then $[K:k(u)]=4$ gives
$4=|V|\leq|\Aut_{k(u)}(K)|\leq[K:k(u)]=4$.
Thus $\Aut_{k(u)}(K)=V$, and its order equals the degree of the
extension. Hence $K/k(u)$ is Galois with Galois group $V$.
\end{proof}

We first determine the fixed points of the three nontrivial elements of $V$. Since the characteristic is odd, all three involutions are tame.

\begin{lemma}\label{lem:fixed}
The involution $\tau$ fixes precisely $P_0,P_\infty,P_\alpha,P_{-\alpha}$. The fixed points of $\rho$ are the $2n$ points above $x=1$, and the fixed points of $\tau\rho$ are the $2n$ points above $x=-1$. In particular, $|\Fix(\tau)|=4$ and $|\Fix(\rho)|=|\Fix(\tau\rho)|=2n$.
\end{lemma}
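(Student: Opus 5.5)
The plan is to locate fixed points by looking first at the induced action on the $x$-line and then lifting to $X$. An automorphism $\sigma\in V$ fixes a point $P$ only if the induced map on the $x$-line fixes the image $x(P)$. For $\tau$ the induced map is the identity, while for $\rho$ and $\tau\rho$ it is $x\mapsto x^{-1}$, whose only fixed points on $\PP^1$ are $x=\pm1$.

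\emph{Fixed points of $\tau$.} Here $\tau$ generates the order-two subgroup of the cyclic Kummer group $\langle y\mapsto\zeta y\rangle\simeq C_{2n}$ of $K/k(x)$. Fix a point $Q$ of the $x$-line. Over each $Q\neq 0,\infty,\pm\alpha$, the $2n$ points of $X$ are permuted simply transitively by this group, so the nontrivial element $\tau$ fixes none of them. Over $0,\infty,\pm\alpha$ there is a single point, which every element of the Kummer group fixes. Hence $\Fix(\tau)=\{P_0,P_\infty,P_\alpha,P_{-\alpha}\}$. As a cross-check, Riemann--Hurwitz for the tame double cover $X\to X/\langle\tau\rangle$ gives $4n-4=2(2g'-2)+|\Fix(\tau)|$, which with four fixed points gives $g'=n-1$. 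That value is consistent with $X/\langle\tau\rangle$ being given by $w^n=x^{n-1}(x^2+1)$ with $w=y^2$.

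\emph{Fixed points of $\rho$ and $\tau\rho$.} A fixed point of $\rho$ must lie over $x=1$ or $x=-1$. Since $\pm1\notin\{0,\infty,\pm\alpha\}$, each of these values is unramified and has exactly $2n$ points above it, namely $(\pm1,y_0)$ with $y_0^{2n}=(\pm1)^{n-1}\cdot 2$. Because $n$ is even, this reads $y_0^{2n}=2$ over $x=1$ and $y_0^{2n}=-2$ over $x=-1$. At such a point, $\rho$ sends $y_0$ to $y_0/x$. Over $x=1$ this is $y_0$, so all $2n$ points above $x=1$ are fixed by $\rho$. Over $x=-1$ it is $-y_0$, and $-y_0\neq y_0$ since $y_0\neq0$ and $p$ is odd, so none of those points is fixed by $\rho$. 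For $\tau\rho:(x,y)\mapsto(x^{-1},-y/x)$ the signs reverse. Over $x=-1$ we get $y_0\mapsto y_0$, so all $2n$ points there are fixed. Over $x=1$ we get $y_0\mapsto -y_0$, so none are fixed.

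\emph{Main obstacle.} The only delicate point is making the pointwise computation rigorous. Since the points over $x=\pm1$ are unramified, $x\mp1$ is a local parameter there, and $y$ is a unit taking the value $y_0$. Any automorphism preserving the fiber permutes its points, and it fixes a point exactly when it preserves the residue value of $y$, so comparing $y_0$ with $\pm y_0$ settles the question. As a final consistency check, Riemann--Hurwitz for $\rho$ gives $4n-4=2(2g''-2)+2n$, so $g''=n/2$, which is an integer because $n$ is even. The fixed-point counts $4$, $2n$, $2n$ then follow.
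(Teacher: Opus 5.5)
Your proof is correct and follows essentially the same route as the paper: for $\tau$, the points over $0,\infty,\pm\alpha$ are totally ramified and the Kummer group acts freely elsewhere; for $\rho$ and $\tau\rho$, you reduce to the fibres over $x=\pm1$ and compare $y_0$ with $\pm y_0$. Your remark that $x\mapsto x^{-1}$ fixes only $\pm1$ on $\mathbf{P}^1$ does the same job as the paper's explicit note that $P_0\leftrightarrow P_\infty$ and $P_\alpha\leftrightarrow P_{-\alpha}$ are swapped, and the Riemann--Hurwitz checks are harmless extras not needed for the argument.
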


\begin{proof}
The four points $P_0,P_\infty,P_\alpha,P_{-\alpha}$ are the only
ramification points of the Kummer cover $x$, and they are all totally
ramified. Hence every deck transformation fixes them, while the deck
group acts freely elsewhere. Therefore $\tau$ fixes precisely these
four points. Suppose that a point is fixed by $\rho$. Its $x$-coordinate must then be fixed by $x\mapsto x^{-1}$. The points above $0$ and $\infty$ are exchanged, as are $P_\alpha$ and $P_{-\alpha}$, so a fixed point must lie above $x=1$ or $x=-1$. Above $x=1$, the map $\rho$ fixes $y$, and the equation becomes $y^{2n}=2$, which has $2n$ distinct solutions because $p\nmid2n$. Above $x=-1$, the map sends $y$ to $-y$, and no point is fixed because there $y^{2n}=-2\ne0$. The proof for $\tau\rho$ is the same, with $x=1$ and $x=-1$ interchanged.
\end{proof}

\begin{proposition}\label{prop:quotient-genus}
The quotient curve $X/\langle\tau\rangle$ has genus $n-1$, while
$X/\langle\rho\rangle$ and $X/\langle\tau\rho\rangle$ each have genus
$n/2$. In particular, each quotient curve has genus at least three.
\end{proposition}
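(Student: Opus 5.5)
The plan is to apply the Riemann--Hurwitz formula to each of the three degree-two quotient maps $X\to X/\langle\sigma\rangle$, for $\sigma\in\{\tau,\rho,\tau\rho\}$. Lemma~\ref{lem:fixed} supplies the fixed-point counts, and $g(X)=2n-1$ is known. Since the characteristic is odd, each involution is tame. Every fixed point of $\sigma$ is therefore a ramification point with different exponent exactly $1$, and no other point ramifies. For the quotient $Y_\sigma=X/\langle\sigma\rangle$ this gives
\[
2g(X)-2=2\bigl(2g(Y_\sigma)-2\bigr)+|\Fix(\sigma)|.
\]

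For $\sigma=\tau$ we have $|\Fix(\tau)|=4$, so $4n-4=4g(Y_\tau)-4+4$. Hence $g(Y_\tau)=n-1$. For $\sigma=\rho$ or $\tau\rho$ we have $|\Fix(\sigma)|=2n$, so $4n-4=4g(Y_\sigma)-4+2n$. Hence $g(Y_\sigma)=n/2$, which is an integer because $n$ is even.

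As a consistency check, I would verify the accounting formula for the Klein four-group $V$ acting with quotient $\PP^1=k(u)$:
\[
g(X)=g(Y_\tau)+g(Y_\rho)+g(Y_{\tau\rho})-2g(X/V).
\]
Here the right-hand side is $(n-1)+n/2+n/2-0=2n-1$, which matches. This is optional but reassuring.

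Finally, the paper has shown that $n$ is even and $n\ge6$. Then $n-1\ge5$ and $n/2\ge3$, so every quotient has genus at least three.

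I expect no real obstacle here. The only care needed is the justification that tame involutions have different exponent $1$ exactly at their fixed points, which is standard (Hilbert's different formula).
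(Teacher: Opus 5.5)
Your proposal is correct and is essentially the paper's proof: Riemann--Hurwitz applied to each tame double cover $X\to X/\langle\sigma\rangle$ with $r=|\Fix(\sigma)|$ taken from Lemma~\ref{lem:fixed} gives genus $n-1$ for $\tau$ and $n/2$ for $\rho$ and $\tau\rho$, and then $n\geq6$ yields genus at least three. Your Klein-four genus identity $g(X)=g(Y_\tau)+g(Y_\rho)+g(Y_{\tau\rho})-2g(X/V)$ is not used in the paper, but it is a valid check and agrees with $X/V$ having the rational function field $k(u)$.
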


\begin{proof}
Put $Y=X/\langle\iota\rangle$. The quotient map $X\to Y$ has degree
two, and its ramification points are exactly the fixed points of
$\iota$. Each such point has ramification index two. Since
$\operatorname{char}k\ne2$, this index is prime to the characteristic,
so the map is tame and each ramification point contributes $2-1=1$ to
the ramification term in the Riemann--Hurwitz formula. Consequently, if
$\iota$ has $r$ fixed points, then
$2g(X)-2=2(2g(Y)-2)+r$; see
\cite[Theorem~3.4.13]{Stichtenoth}.

Now $2g(X)-2=4n-4$. For $\iota=\tau$, Lemma~\ref{lem:fixed} gives
$r=4$, and hence $4n-4=2(2g(Y)-2)+4$, so $g(Y)=n-1$.
For $\iota=\rho$ or $\tau\rho$, it gives $r=2n$, and hence
$4n-4=2(2g(Y)-2)+2n$, so $g(Y)=n/2$. Thus each quotient curve has
genus at least three because $n\geq6$.
\end{proof}

\section{The unique rational subfield of index four}

We use Castelnuovo's inequality in the following form. If $F_1,F_2\subset L$, if $L=F_1F_2$, and if $d_j=[L:F_j]$, then
\begin{equation}\label{eq:CS}
  g(L)\leq d_1g(F_1)+d_2g(F_2)+(d_1-1)(d_2-1).
\end{equation}
See \cite[Theorem~3.11.3]{Stichtenoth}.

The first consequence rules out the only low-genus double quotients that could arise in the uniqueness argument.

\begin{proposition}\label{prop:no-low-genus}
There is no degree-two morphism from $X$ to a curve of genus at most one.
\end{proposition}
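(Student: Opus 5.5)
Suppose, for contradiction, that $\varphi:X\to Y$ is a morphism of degree two with $g(Y)\leq1$. We will apply Castelnuovo's inequality \eqref{eq:CS} to the subfields $F_1=k(\varphi^*k(Y))$ and $F_2=k(u)$ of $L=K$. The degrees are $d_1=2$ and $d_2=4$, and the genera are $g(F_1)\le 1$ and $g(F_2)=0$. Everything then depends on whether the compositum $F_1k(u)$ is all of $K$.

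Suppose first that $F_1k(u)=K$. Then \eqref{eq:CS} gives
\[
2n-1=g(K)\le 2\cdot 1+4\cdot 0+(2-1)(4-1)=5,
\]
so $n\le 3$. This contradicts $n\ge 6$.

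Now suppose instead that $F_1k(u)\subsetneq K$. The compositum contains $k(u)$, so it is an intermediate field of $K/k(u)$. By Proposition~\ref{prop:cover} this extension is Galois with group $V$, so the compositum is either $k(u)$ or the fixed field of one of the three involutions $\tau,\rho,\tau\rho$. It cannot be $k(u)$, because then $F_1\subseteq k(u)$, while $[K:F_1]=2<4=[K:k(u)]$. So the compositum is $K^{\langle\iota\rangle}$ for some nontrivial $\iota\in V$. Since $[K:F_1]=2=[K:K^{\langle\iota\rangle}]$ and $F_1\subseteq K^{\langle\iota\rangle}$, we get $F_1=K^{\langle\iota\rangle}$. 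Hence $Y\cong X/\langle\iota\rangle$. By Proposition~\ref{prop:quotient-genus} this curve has genus at least three, contradicting $g(Y)\le1$.

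If one prefers not to invoke the field-theoretic compositum, the same dichotomy can be phrased as follows: either $\varphi\times u:X\to Y\times\PP^1$ is birational onto its image, or it factors through a proper quotient. The case split is the only delicate point. The key observation is that, because $K/k(u)$ is Galois with group $V$, every intermediate field of degree two under $K$ is one of the three quotients already computed in Proposition~\ref{prop:quotient-genus}. Everything else is a direct substitution into \eqref{eq:CS}.
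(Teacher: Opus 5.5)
Your proof is correct and is essentially the paper's argument: Castelnuovo's inequality with the rational field $k(u)$ excludes the case where the compositum is all of $K$, and otherwise the degree-two subfield must equal one of the three fixed fields of $V$, whose genera are at least three by Proposition~\ref{prop:quotient-genus}. The only cosmetic difference is that you locate the compositum through the Galois correspondence for $K/k(u)$, whereas the paper uses $[K:A]=2$ to force $AB\in\{A,K\}$ and then observes that the nontrivial automorphism of $K/A$ fixes $k(u)$ and so lies in $V$.
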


\begin{proof}
Suppose, to the contrary, that $f:X\to D$ is a degree-two morphism with $g(D)\leq1$, and identify $A=k(D)$ with its image under the pullback $f^*:k(D)\hookrightarrow K$. Then $[K:A]=\deg(f)=2$. Since $\operatorname{char}(k)\ne2$, the extension $K/A$ is separable and, being quadratic, is Galois. Put $B=k(u)$, so $[K:B]=4$. Since $A\subseteq AB\subseteq K$ and $[K:A]=2$, either $AB=A$ or $AB=K$.

If $AB=K$, then $B=k(u)$ is rational, so $g(B)=0$. Applying \eqref{eq:CS} to $A$ and $B$ gives $g(X)\leq2g(D)+4g(B)+(2-1)(4-1)=2g(D)+3\leq5$, contrary to $g(X)=2n-1\geq11$.

It follows that $AB=A$, so $B\subseteq A$. Let $\iota$ be the nontrivial automorphism of the quadratic Galois extension $K/A$. Since $\iota$ fixes $B$, Proposition~\ref{prop:cover} gives $\iota\in V$. Hence $A=K^{\langle\iota\rangle}$ is one of the three intermediate quadratic fields of $K/B$. Proposition~\ref{prop:quotient-genus} shows that each of these fields has genus at least three, a contradiction.
\end{proof}

\begin{proposition}\label{prop:unique}
If $v\in K\setminus k$ satisfies $[K:k(v)]=4$, then $k(v)=k(u)$. Equivalently, $k(u)$ is the unique rational subfield of $K$ having index four.
\end{proposition}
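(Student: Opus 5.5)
Let $B=k(u)$ and $C=k(v)$, both rational with $[K:B]=[K:C]=4$. I will apply Castelnuovo's inequality \eqref{eq:CS} to the compositum $L=BC\subseteq K$ and split into cases according to $[K:BC]$. Since $B\subseteq BC\subseteq K$ and $[K:B]=4$, the index $[K:BC]$ divides $4$, so it is $1$, $2$, or $4$. If it is $4$, then $BC=B$, so $C\subseteq B$, and equal degrees force $C=B$, which is the claim.

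If $BC=K$, then \eqref{eq:CS} with $d_1=d_2=4$ and $g(B)=g(C)=0$ gives
\[
g(X)\leq 4\cdot0+4\cdot0+(4-1)(4-1)=9,
\]
contradicting $g(X)=2n-1\geq11$, since $n\geq6$.

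The remaining case, $[K:BC]=2$, is the one that needs Proposition~\ref{prop:no-low-genus}. Put $A=BC$. Then $K/A$ is a quadratic extension, so $A$ corresponds to a degree-two morphism $X\to D$, and $B\subseteq A$ with $[A:B]=2$. I will show $g(A)\leq1$, which contradicts Proposition~\ref{prop:no-low-genus}. Inside $A$ we have $[A:B]=2$, and also $[A:C]=[K:C]/[K:A]=2$. Since $A=BC$, Castelnuovo's inequality applies within $A$ with $d_1=d_2=2$:
\[
g(A)\leq 2\cdot0+2\cdot0+(2-1)(2-1)=1.
\]
This is the desired contradiction. One could alternatively argue via Proposition~\ref{prop:cover}: $A$ is one of the three intermediate quadratic fields of $K/B$, each of genus at least $3$ by Proposition~\ref{prop:quotient-genus}, contradicting $g(A)\leq1$. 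Either route closes the case.

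The main point to check is that \eqref{eq:CS} is used correctly in the last case: it must be applied to $L=A$ rather than $K$, with the subfields $B,C\subseteq A$ having compositum $A$. Both conditions hold by construction. All fields involved have full constant field $k$, since $k$ is algebraically closed, so the hypotheses of Stichtenoth's theorem are met. The stated equivalence is then immediate: any rational subfield of index four has the form $k(v)$ with $[K:k(v)]=4$.
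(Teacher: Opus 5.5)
Your proposal is correct and matches the paper's proof essentially step for step: the same trichotomy $[K:BC]\in\{1,2,4\}$, Castelnuovo with $d_1=d_2=4$ to rule out $BC=K$, and Castelnuovo inside $BC$ with $d_1=d_2=2$ to get $g(BC)\le 1$, contradicting Proposition~\ref{prop:no-low-genus}. Your alternative way of closing the index-two case, via Propositions~\ref{prop:cover} and~\ref{prop:quotient-genus}, is also valid; it is the argument the paper uses inside the proof of Proposition~\ref{prop:no-low-genus}.
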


\begin{proof}
Let $B=k(u)$ and $B'=k(v)$, set $L=BB'$, and put $e=[K:L]$. Since $[K:B]=[K:B']=4$, the integer $e$ belongs to $\{1,2,4\}$.

If $e=1$, then $K=BB'$. Applying \eqref{eq:CS} to the two rational fields $B$ and $B'$ gives $g(X)\leq(4-1)^2=9$, contrary to $g(X)\geq11$.

Suppose that $e=2$. The tower law gives $[L:B]=[L:B']=2$, and $L=BB'$. Applying \eqref{eq:CS} inside $L$ gives $g(L)\leq(2-1)^2=1$. Since $[K:L]=2$, this contradicts Proposition~\ref{prop:no-low-genus}.

Therefore $e=4$. Since $B\subseteq L$ and both fields have index four in $K$, one has $L=B$. The same argument gives $L=B'$, and hence $B=B'$.
\end{proof}

\section{The action on the \texorpdfstring{$u$}{u}-line and the upper bound}

Let $A=\Aut_k(K)$. For every $\sigma\in A$, the field $\sigma(k(u))$ is again a rational subfield of index four. Proposition~\ref{prop:unique} therefore implies that $\sigma(k(u))=k(u)$. Thus restriction to $k(u)$ defines a homomorphism $A\to\Aut_k(k(u))\simeq\PGL_2(k)$. If $\Gamma$ is its image, then
\begin{equation*}
1\longrightarrow V\longrightarrow A\longrightarrow\Gamma\longrightarrow1
\end{equation*}
is exact.

We next describe the branch points of the biquadratic cover
$u:X\to\PP^1$. The four fixed points of $\tau$ map to $0$ or $\infty$
by the divisor of $u$. At a fixed point of $\rho$ one has $x=1$, so
$u^n=x+x^{-1}=2$; at a fixed point of $\tau\rho$ one has $x=-1$, so
$u^n=-2$. Conversely, the points above $x=1$ and $x=-1$ map onto all
roots of these two equations. Hence the three branch subsets associated
with $\tau$, $\rho$, and $\tau\rho$ are respectively $\{0,\infty\}$,
$\{a\in k:a^n=2\}$, and $\{a\in k:a^n=-2\}$. They are pairwise
disjoint, and the full branch locus is $\mathcal{B}=\{0,\infty\}\cup R$,
where $R=\{a\in k:a^{2n}=4\}$.

The subgroup $V$ is normal in $A$, since it is the kernel of the restriction map. For $\sigma\in A$, the conjugate $\sigma\tau\sigma^{-1}$ belongs to $V$ and has the same number of fixed points as $\tau$. By Lemma~\ref{lem:fixed}, $\tau$ is the unique nontrivial element of $V$ with four fixed points, because $2n>4$. Thus $\sigma\tau\sigma^{-1}=\tau$ for every $\sigma\in A$. It follows that $\sigma$ preserves $\Fix(\tau)$, and consequently the induced element of $\Gamma$ preserves its image $\{0,\infty\}$ on the $u$-line. Conjugation by $\sigma$ fixes $\tau$ and therefore permutes the remaining two nontrivial elements $\rho$ and $\tau\rho$ of $V$. Since $\sigma(\Fix(\iota))=\Fix(\sigma\iota\sigma^{-1})$ for every $\iota\in V$, the automorphism $\sigma$ preserves $\Fix(\rho)\cup\Fix(\tau\rho)$. Since $u(\Fix(\rho)\cup\Fix(\tau\rho))=R$, the induced element of $\Gamma$ preserves $R$.

A M\"{o}bius transformation preserving the unordered pair $\{0,\infty\}$ has the form $t\mapsto ct$ or $t\mapsto c/t$, with $c\in k^\times$. The first form preserves $R$ precisely when $c^{2n}=1$, while the second preserves $R$ precisely when $c^{2n}=16$. Since $p\nmid2n$, each equation has exactly $2n$ solutions in $k$. Therefore $|\Gamma|\leq4n$, and hence $|A|=|V||\Gamma|\leq4\cdot4n=16n=4(q+1)$.

\begin{proof}[Proof of Theorem~\ref{thm:main}]
The preceding argument gives $|\Aut_k(K_i)|\leq4(q+1)$. By \cite[Theorem~4.6]{BeelenMontanucciNiemannQuoos}, the group $\Aut_k(K_i)$ contains the subgroup $H$ of order $4(q+1)$. Hence equality holds and $\Aut_k(K_i)=H$.
\end{proof}

\enlargethispage{3\baselineskip}
\section*{Acknowledgments}
The author thanks Yuxiang Yao for helpful discussions.

\section*{Statement on AI-assisted preparation}
During the preparation of this manuscript, the author used OpenAI's GPT-5.6 Sol
language model through Codex for exploratory proof development, literature
organization, and language editing. In particular, the suggestion to organize
the upper-bound argument around the common invariant $u=y^2/x$ arose during
that interaction. The author independently checked every argument 
and assumes full responsibility for the content of the manuscript.

\printbibliography[title={References}]

\end{document}